\renewcommand{\(}{\left(}
\renewcommand{\)}{\right)}
\newtheorem{theo}{Theorem}
\newtheorem{prop}[theo]{Proposition}
\newtheorem{lemma}[theo]{Lemma}
\newtheorem{cor}[theo]{Corollary}
\theoremstyle{definition}
\newtheorem*{df}{Definition}
\newtheorem*{ex}{Example}
\newtheorem*{exs}{Examples}
\theoremstyle{remark}
\newtheorem*{rem}{Remark}
\newcommand{\beq}{\begin{equation}} 
\newcommand{\eeq}{\end{equation}} 
\newcommand{\bal}{\begin{align}} 
\newcommand{\eal}{\end{align}} 
\newcommand{\bals}{\begin{align*}} 
\newcommand{\eals}{\end{align*}} 
\newcommand{\barr}[1]{\begin{array}{#1}} 
\newcommand{\earr}{\end{array}}
\newcommand{\bth}{\begin{theo}} 
\newcommand{\bl}{\begin{lemma}} 
\newcommand{\el}{\end{lemma}} 
\newcommand{\bp}{\begin{prop}} 
\newcommand{\ep}{\end{prop}} 
\newcommand{\bdf}{\begin{df}} 
\newcommand{\edf}{\end{df}} 
\newcommand{\brem}{\begin{rem}} 
\newcommand{\erem}{\end{rem}} 
\newcommand{\bnrem}{\begin{nrem}} 
\newcommand{\enrem}{\end{nrem}} 
\newcommand{\bex}{\begin{ex}} 
\newcommand{\eex}{\end{ex}} 
\newcommand{\bcor}{\begin{cor}} 
\newcommand{\ecor}{\end{cor}} 
\newcommand{\bncor}{\begin{ncor}} 
\newcommand{\encor}{\end{ncor}} 
\newcommand{\bpf}{\begin{proof}} 
\newcommand{\epf}{\end{proof}}
\def\({\left(} 
\def\){\right)} 
\def\fl#1{\left\lfloor#1\right\rfloor}
\def\Cov{\operatorname{\bf Cov}}
\newcommand{\E}{\operatorname{\bf E}} 
\newcommand{\PP}{\operatorname{\bf P}} 
\newcommand{\V}{\operatorname{\bf Var}} 
\def\ep{\varepsilon}
\numberwithin{equation}{section}
\begin{document}

\newbox\Adr
\setbox\Adr\vbox{
\centerline{$^*$Institute of Discrete Mathematics and Geometry, TU Wien,} 
\centerline{Wiedner Hauptstra\ss e 8--10, A-1040 Vienna, Austria.}
\centerline{WWW: \footnotesize\tt https://www.dmg.tuwien.ac.at/drmota}
\vskip6pt
\centerline{$^\dagger$Fakult\"at f\"ur Mathematik der Universit\"at Wien,}
\centerline{Oskar-Morgenstern-Platz~1, A-1090 Vienna, Austria.}
\centerline{WWW: \footnotesize\tt
  http://www.mat.univie.ac.at/\lower0.5ex\hbox{\~{}}kratt} 
}

\title[A Joint Central Limit Theorem for the Sum-of-Digits Function]
{A Joint Central Limit Theorem for the Sum-of-Digits Function,
and Asymptotic Divisibility of Catalan-like Sequences}

\author[Michael Drmota and Christian Krattenthaler]{Michael Drmota$^*$
  and Christian Krattenthaler\textsuperscript{\dag{}}\\\\\box\Adr} 

\address{$^*$Institute of Discrete Mathematics and Geometry, TU Wien, 
Wiedner Hauptstra\ss e 8--10, A-1040 Vienna,
Austria. WWW: {\tt https://www.dmg.tuwien.ac.at/drmota}.}

\address{\textsuperscript{\dag{}}Fakult\"at f\"ur Mathematik,
  Universit\"at Wien,  
Oskar-Morgenstern-Platz~1, A-1090 Vienna, Austria.
WWW: {\tt http://www.mat.univie.ac.at/\lower0.5ex\hbox{\~{}}kratt}.}

\thanks{{}$^{*\dagger}$\,Research partially 
supported by the Austrian  Science Foundation FWF, 
in the framework of the Special Research Program
``Algorithmic and Enumerative Combinatorics", project F50-N15.}

\subjclass[2010]{Primary 11K65;
Secondary 05A15 11A63 11N60 60F05}

\keywords{Sum-of-digits function, quasi-additive functions,
central limit theorem, Catalan numbers, central binomial coefficients}

\begin{abstract}
We prove a central limit theorem for the 
joint distribution of $s_q(A_jn)$, $1\le j \le d$, where $s_q$ denotes
the sum-of-digits function in base~$q$ and the $A_j$'s are positive integers
relatively prime to~$q$. We do this in fact within the framework
of quasi-additive functions.
As application, we show that most elements of 
``Catalan-like" sequences --- by which we mean integer sequences
defined by products/quotients of factorials --- are divisible by
any given positive integer.
\end{abstract}

\maketitle 
 
\section{Introduction}

In \cite{BurnAA}, Burns shows that most of the ubiquitous 
{\it Catalan numbers} $C_n:=\frac {1} {n+1}\binom {2n}n$
(cf.\ \cite[Ex.~6.19]{StanBI})
are divisible by~$p$, where $p$ is some given prime number.
Let $v_p(N)$ denote the {\it $p$-adic valuation} of the integer~$N$,
which by definition is the maximal exponent~$\alpha$ such that
$p^\alpha$ divides~$N$.
In view of Legendre's formula \cite[p.~10]{LegeAA}
for the $p$-adic valuation of factorials, 
\begin{equation} \label{eq:Leg} 
v_p(n!)=\frac {1} {p-1}\big(n-s_p(n)\big),
\end{equation}
where $s_p(N)$ denotes the {\it $p$-ary sum-of-digits function}
\[
s_p(N) = \sum_{j\ge 0} \varepsilon_j(N),
\]
with $\varepsilon_j(N)$ denoting the $j$-th digit in the $p$-adic
representation of~$N$, we have
$$
v_p\left(\frac {1} {n+1}\binom {2n}n\right)
=\frac {1} {p-1}\big(2s_p(n)-s_p(2n)\big)-v_p(n+1).
$$
Thus, one sees that the above and many more asymptotic divisibility
results --- such as the divisibility of most of the Catalan numbers,
or even of most of the
{\it Fu\ss--Catalan numbers} (cf.\ \cite[pp.~59--60]{ArmDAA})
by any given prime {\it power} --- 
can be proved if one has sufficiently precise results
on the distribution of the vector
\begin{equation} \label{eq:vector} 
(s_p(A_1n),s_p(A_2n),\dots,s_p(A_dn)),\quad \quad n<N.
\end{equation}
Indeed, for $p=2$,
Schmidt \cite{Schmidt} and Schmid \cite{Schmid} 
showed that for pairwise different positive odd integers
$A_1,A_2,\ldots, A_d$ the vector \eqref{eq:vector} satisfies
a $d$-dimensional central limit theorem 
with asymptotic mean vector $(1/2,\ldots, 1/2) \cdot \log_2 N$ and
asymptotic covariance matrix $\Sigma \cdot\log_2 N$ with 
\[
\Sigma = \left(  \frac{{\rm gcd}(A_i,A_j)^2}{4 A_i A_j}
\right)_{1\le i,j \le d}. 
\]

We want to mention that the distribution of the vector \eqref{eq:vector}
in residue classes was intensively studied in \cite{DaTe}. 
Furthermore we also emphasize that many results on divisiblity
properties of non-central 
binomial coefficients are available, too, see for example \cite{SpW}.

\medskip

The first goal of the present paper is to generalize the central limit
theorem by Schmidt \cite{Schmidt} and Schmid \cite{Schmid}
to arbitrary primes~$p$, and even to arbitrary bases~$q$.
We do this in Theorem~\ref{ThCLT} 
in Section~\ref{sec1}, by using an even more general 
concept, namely the concept of {\it $q$-quasi-additive functions}.

We then apply this result in
Section~\ref{sec2} (see Theorem~\ref{thm:almostall} and
Corollary~\ref{cor:almostall}) 
to prove the somewhat non-intuitive fact that most elements of 
any sequence $\big(S(n)\big)_{n\ge0}$ of integers given by a
(non-trivial) formula
\begin{equation*} 
\frac {P(n)} {Q(n)}
\frac {
\prod _{i=1} ^{r}(C_in)!} {
\prod _{i=1} ^{s}(D_in)!} 
\end{equation*}
are divisible by any given prime power, 
and thus by any given positive integer.
Here, $P(n)$ and $Q(n)$ are polynomials in~$n$ over
the integers, where $Q(n)$ is a product of linear factors,
and the $C_i$'s and $D_i$'s are positive integers with
$\sum_{i=1}^rC_i=\sum_{i=1}^sD_i$. The attribute ``non-trivial"
means that the set of~$C_i$'s is different from the set of~$D_i$'s.
As is pointed out in
more detail in Section~\ref{sec2}, numerous (mainly
combinatorial) sequences that appear in the literature in
various contexts are of this form.

\section{A central limit theorem}
\label{sec1}

Let $q\ge 2$ be a given integer. 
It is well known that the sum-of-digits function 
$s_q(n)$ satisfies a central
limit theorem of the form
\begin{equation} \label{eq:clt} 
\frac 1N \# \left\{ n < N : s_q(n) \le \mu_q \log_q N + t \sqrt{\sigma_q^2
  \log_q N} \right\}  
= \Phi(t) + o(1),
\end{equation}
uniformly in $t$, where $\mu_q = (q-1)/2$, $\sigma_q^2 = (q^2-1)/12$, and 
$\Phi(t)$ denotes the distribution function of the standard Gau\ss ian
distribution. 
This result is easy to prove since the digits $\varepsilon_j(n)$,
$0\le j < \log_q(N)$, 
behave almost as i.i.d.\ random variables if $n$ varies between $0$ and $N-1$.
Actually much more is known (see for example \cite{BK}). Suppose that
$P(x)$ is a 
polynomial of degree $D\ge 1$ with non-negative integer
coefficients. Then we also have 
\[
\frac 1N \# \left\{ n < N : s_q(P(n)) \le \mu_q \log_q P(N) + t
\sqrt{\sigma_q^2 \log_q P(N)} \right\}  
= \Phi(t) + o(1).
\]
Note that the value $P(N)$ can be replaced by $N^D$ without changing
the validity of the statement. 

This result applies in particular to linear polynomials $P_j(n) = A_j
n$ (with integers $A_j \ge 1$). 
In what follows, we will consider linear combinations of the form
\begin{equation}\label{eqdeffn}
f(n) = c_1 s_q(A_1n) + c_2 s_q(A_2n) + \cdots + c_d s_q(A_dn), \qquad n < N,
\end{equation}
with real numbers $c_j$ and integers $A_j \ge 1$, $1\le j\le d$.
Clearly, the central limit result of
Schmidt \cite{Schmidt} and Schmid \cite{Schmid} mentioned in the
introduction is equivalent to the fact that 
$f(n)$ as in \eqref{eqdeffn} with $q=2$, $n < N$, satisfies a 
one-dimensional central limit theorem with asymptotic mean $\frac
12(c_1 + c_2 + \cdots + c_d)\cdot \log_2 N$ and 
asymptotic covariance $ {\bf c} \Sigma {\bf c}^t \cdot \log_2 N$,
where ${\bf c} = (c_1,c_2, \ldots, c_d)$. 

It is also clear that the results of \cite{Schmid,Schmidt} should directly
transfer to a general basis $q\ge 2$ 
so that we can cover general $f(n)$. We will establish this
generalization, however, with a  
completely different (and in fact more modern) proof. 

\begin{theo}\label{ThCLT}
Let $q\ge 2$ be an integer, 
and let $A_1,A_2,\ldots, A_d$ be positive integers. Then the vector 
\begin{equation}\label{eqTh}
(s_q(A_1 n), s_q(A_2 n), \ldots, s_q(A_d n) ), \qquad 0\le n < N,
\end{equation}
satisfies a $d$-dimensional central limit theorem
with asymptotic mean vector $((q-1)/2,\break\ldots, (q-1)/2) \cdot \log_q N$
and asymptotic covariance matrix $\Sigma\cdot \log_q N$, where
$\Sigma$ is positive semi-definite.

If we further assume that $q$ is prime and that the integers
$A_1,A_2,\ldots, A_d$ 
are not divisible by $q$, then $\Sigma$ is explicitly given by
\begin{equation} \label{eq:Cov} 
\Sigma = \left( \frac {(q^2-1)}{12} \frac{{\rm gcd}(A_i,A_j)^2}{A_i A_j}
\right)_{1\le i,j \le d}. 
\end{equation}
\end{theo}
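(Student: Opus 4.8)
The plan is to prove Theorem~\ref{ThCLT} by establishing a more general central limit theorem for \emph{$q$-quasi-additive functions} and then checking that the vector in~\eqref{eqTh} fits into that framework. Recall that a function $g$ is $q$-quasi-additive if $g(q^k a + b) = g(a) + g(b)$ whenever $b < q^k$ and $k$ is at least some fixed ``gap size'' depending on~$g$. Each component $n \mapsto s_q(A_j n)$ is of this type: writing $n = q^k a + b$ with $b < q^k$, one has $A_j n = q^k (A_j a) + A_j b$, and since $A_j b < q^{k}\cdot A_j < q^{k+\ell}$ for $\ell = \lceil \log_q A_j\rceil$, the digit expansions of $A_j a$ (shifted) and $A_j b$ overlap in at most $\ell$ positions; hence $s_q(A_j n) = s_q(A_j a) + s_q(A_j b) + \Ord{1}$. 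So the vector-valued function $\mathbf{g}(n) = (s_q(A_1 n),\ldots,s_q(A_d n))$ is, up to a bounded error, $q$-quasi-additive.

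For the proof of the CLT itself I would use the standard blocking / approximate-independence method. Fix $N$, set $L = \fl{\log_q N}$, and subdivide the base-$q$ digits of $n < N$ into consecutive blocks of length $m$, where $m \to \infty$ slowly (e.g.\ $m = \fl{\log L}$). Because $\mathbf g$ is quasi-additive with a bounded gap, the contributions of the blocks are, up to a bounded boundary error per block, independent when $n$ is chosen uniformly in $[0,N)$. The partial sum over each block is a bounded (vector-valued) random variable, so one applies the Lindeberg--Feller CLT for triangular arrays: one needs the total variance to grow like a constant times $L$ (so that after normalization by $\sqrt{L}$ it stays of order~$1$) and the Lindeberg condition, which is automatic here since the individual block contributions are uniformly bounded while their number $\sim L/m \to \infty$. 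This yields a $d$-dimensional CLT with mean vector growing linearly in $L = \log_q N$ — the mean of each coordinate is $(q-1)/2$ times $\log_q N$ because $s_q(A_j n)$ and $s_q(n)$ have the same leading-order mean — and covariance matrix $\Sigma \cdot \log_q N$ for some $\Sigma$ which is positive semi-definite, being a limit of covariance matrices.

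For the explicit formula~\eqref{eq:Cov} under the hypotheses that $q$ is prime and $q \nmid A_j$, I would compute $\Sigma$ as a per-digit (or per-block) covariance and pass to the limit. The entry $\Sigma_{ij}$ is the limiting covariance, per digit of $n$, between $\epsilon_k(A_i n)$-type contributions and $\epsilon_k(A_j n)$-type contributions. The cleanest route is: for $n$ uniform mod a large power of~$q$, the pair $(A_i n \bmod q^M, A_j n \bmod q^M)$ is governed by the joint distribution of $A_i n$ and $A_j n$ in $\Z/q^M\Z$; writing $g = \gcd(A_i, A_j)$ and $A_i = g a_i$, $A_j = g a_j$ with $\gcd(a_i,a_j)=1$ and $q \nmid a_i a_j$ (using primality of $q$ and $q\nmid A_i,A_j$ to keep units invertible), the relevant linear map ties $A_j n$ to $A_i n$ via multiplication by the unit $a_j a_i^{-1}$, which rescales the relevant ``carry'' statistics by the factor $g^2/(A_i A_j) = 1/(a_i a_j)$. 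Combined with the single-variable variance constant $\sigma_q^2 = (q^2-1)/12$ from~\eqref{eq:clt}, this produces $\Sigma_{ij} = \frac{q^2-1}{12}\cdot\frac{\gcd(A_i,A_j)^2}{A_i A_j}$. The diagonal check $\Sigma_{ii} = (q^2-1)/12$ is consistent with the classical one-dimensional result.

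The main obstacle is the explicit covariance computation: making rigorous the claim that the joint per-digit correlations between $s_q(A_i n)$ and $s_q(A_j n)$ converge, and that the limit is exactly $\frac{q^2-1}{12}\cdot\frac{\gcd(A_i,A_j)^2}{A_i A_j}$, requires carefully tracking how multiplication by $A_i$ and $A_j$ redistributes digits and carries. Here the primality of $q$ is essential — it guarantees that $a_i, a_j$ are units modulo every power of $q$, so that the map $n \mapsto A_i n$ is a bijection on $\Z/q^M\Z$ after dividing out the common factor, which is what lets one reduce the two-index correlation to a clean rescaling of the one-index variance. Without primality the coprime-to-$q$ part and the $q$-part of the $A_j$'s would have to be handled separately, and the formula would generally change, which is why the theorem only claims~\eqref{eq:Cov} in the prime case. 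One also has to verify that the bounded boundary errors from quasi-additivity genuinely wash out under the $\sqrt{\log_q N}$ normalization and do not affect the limiting covariance — this is routine but must be stated.
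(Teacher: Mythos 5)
There is a genuine gap, in two places. First, your reduction to quasi-additivity is stated incorrectly: writing $n=q^ka+b$ with $b<q^k$, the identity $s_q(A_jn)=s_q(A_ja)+s_q(A_jb)+O(1)$ is \emph{false} in general, because a carry created in the $\ell$ overlapping positions can propagate through an arbitrarily long run of digits $q-1$ of $A_ja$ (e.g.\ $q=10$, $A_j=3$, $a=33\cdots3$, $b=4$ destroys a digit sum of order $\log a$). The notion actually needed is exact quasi-additivity with a gap: $f(q^{k+r}a+b)=f(a)+f(b)$ for $b<q^k$, where $q^r\ge A_j$, so that $A_jb<q^{k+r}$ and the two digit blocks do not interact at all. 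With that definition the additivity is an identity, linear combinations $\sum_jc_js_q(A_jn)$ are again quasi-additive, and the one-dimensional CLT can either be quoted from the Kropf--Wagner theorem on $q$-quasi-additive functions (this is the paper's route, followed by the Cram\'er--Wold projection argument to get the $d$-dimensional statement) or proved by a blocking argument like yours --- but then your blocks must be separated by gaps and the rare long carry chains truncated; as written, ``bounded boundary error per block'' does not hold uniformly.

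Second, and more seriously, the explicit formula \eqref{eq:Cov} --- the substantive second half of the theorem --- is not derived in your proposal. The statement that multiplication by the unit $a_ja_i^{-1}$ ``rescales the relevant carry statistics by the factor $\gcd(A_i,A_j)^2/(A_iA_j)$'' is an assertion of the answer, not an argument, and you yourself flag it as the main obstacle. The paper's proof of this part rests on a concrete Fourier-analytic lemma (adapting Bassily--K\'atai): using a smoothed indicator of the interval $[a/q,(a+1)/q)$ and exponential-sum estimates, one shows that for $(\log N)^{1/3}\le i,j\le \log_qN-(\log N)^{1/3}$ the pair $(\varepsilon_i(A_1n),\varepsilon_j(A_2n))$ is asymptotically uniform and independent when $i\ne j$, while for $i=j$ the only surviving frequencies are $m_1=\ell A_2/D$, $m_2=-\ell A_1/D$ with $D=\gcd(A_1,A_2)$ (here primality of $q$ and $q\nmid A_1A_2$ enter), giving an explicit correction series in $\ell$; summing that series yields exactly $\frac{D^2}{A_1A_2}\cdot\frac{q^2-1}{12}$ per digit position, and summing over the $\sim\log_qN$ interior positions (with the boundary digits controlled by Cauchy--Schwarz) gives \eqref{eq:Cov}. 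Some computation of this type --- an exact identification of the equal-position digit correlations, not a heuristic rescaling --- is indispensable, so your proposal as it stands does not prove the covariance formula.
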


For the proof we make use of the (recent) concept of quasi-additivity
which is thoroughly discussed 
in \cite{quasi}. There, a function $f$ defined on the non-negative
integers is called 
{\it $q$-quasi-additive}, if there exists $r\ge 0$ such that
\begin{equation}\label{eqquasi}
f(q^{k+r}a+b)=f(a)+f(b) \qquad \mbox{for all $b < q^k$}.
\end{equation}
We note that if \eqref{eqquasi} holds for some $r\ge 0$, then it holds
as well for every larger $r$.
This also shows that linear combinations of $q$-quasi-additive
functions are $q$-quasi-additive, too.
We further note that $s_q(n)$ is $q$-quasi-additive with parameter $r=0$.

One of the main results of the paper \cite{quasi} is that any 
$q$-quasi-additive function $f(n)$ of at most logarithmic growth
satisfies a central limit theorem of the form
\[
\frac 1N \# \left\{ n < N : f(n) \le \mu \log_q N + t \sqrt{\sigma^2 \log_q N} \right\} 
= \Phi(t) + o(1),
\]
for appropriate constants $\mu$ and $\sigma^2$.

Our first observation is that $f(n)$ given in \eqref{eqdeffn} is
$q$-quasi-additive. 
The logarithmic growth property is trivially satisfied since
$s_q(n)\le (q-1)\log_q n$. 

\begin{lemma}
Let $A$ and $r$ be positive integers with $q^r \ge A$. Then $g(n) = s_q(An)$
is $q$-quasi-additive {\em(}with parameter $r${\em)}.
\end{lemma}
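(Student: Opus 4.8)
The plan is to verify the defining identity \eqref{eqquasi} directly for $g(n)=s_q(An)$ with the claimed parameter~$r$, i.e.\ to show that
\[
s_q\big(A(q^{k+r}a+b)\big)=s_q(Aa)+s_q(Ab)\qquad\text{for all }b<q^k.
\]
First I would rewrite the argument on the left as $s_q\big(q^{k+r}(Aa)+Ab\big)$. The key observation is that, because $b<q^k$ and $q^r\ge A$, we have $Ab<Aq^k\le q^{k+r}$, so the integer $Ab$ occupies only the lowest $k+r$ base-$q$ digit positions. Hence in the base-$q$ expansion of $q^{k+r}(Aa)+Ab$ there is no carry interaction between the block coming from $Ab$ (digits $0$ through $k+r-1$) and the block coming from $Aa$ (digits $k+r$ and higher): the former contributes exactly the digits of $Ab$, the latter exactly the digits of $Aa$ shifted up by $k+r$ places. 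Since $s_q$ sums all digits and a shift by a power of~$q$ does not change the multiset of nonzero digits, we get $s_q\big(q^{k+r}(Aa)+Ab\big)=s_q(Aa)+s_q(Ab)$, which is precisely \eqref{eqquasi} for the function $g$.

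The only point that needs a little care is the inequality $Ab<q^{k+r}$: we are given $q^r\ge A$, and $b\le q^k-1<q^k$, so $Ab<Aq^k\le q^{r}q^k=q^{k+r}$, as required. (Note that $r$ is automatically $\ge 0$, indeed $\ge 1$ since $A\ge 1$ forces $q^r\ge A\ge 1$ and, when $A\ge 2$, $r\ge1$; in any case the definition \eqref{eqquasi} only requires $r\ge0$.) It is worth recording explicitly the elementary digit fact being used: if $0\le b'<q^m$ and $a'\ge0$, then $s_q(q^m a'+b')=s_q(a')+s_q(b')$, which follows immediately from uniqueness of the base-$q$ representation, $q^m a'+b'=\sum_{j\ge0}\varepsilon_j(b')q^j+\sum_{j\ge0}\varepsilon_j(a')q^{j+m}$ with all digits in $\{0,\dots,q-1\}$. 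Applying this with $m=k+r$, $a'=Aa$, $b'=Ab$ finishes the proof.

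There is essentially no obstacle here; the lemma is a routine bookkeeping statement whose whole content is the carry-free separation of the two digit blocks, guaranteed by the size condition $q^r\ge A$. The reason it is stated separately is that it, combined with the remark that linear combinations of $q$-quasi-additive functions are again $q$-quasi-additive (and that one may always enlarge~$r$), immediately shows that $f(n)$ in \eqref{eqdeffn} is $q$-quasi-additive: take $r$ with $q^r\ge\max_j A_j$, so that each $s_q(A_jn)$ is $q$-quasi-additive with this common parameter~$r$, hence so is $f=\sum_j c_j s_q(A_j\cdot)$. Together with the already-noted logarithmic growth bound $s_q(n)\le(q-1)\log_q n$, this places $f$ in the scope of the central limit theorem for $q$-quasi-additive functions of \cite{quasi}, which is the entry point for the proof of Theorem~\ref{ThCLT}.
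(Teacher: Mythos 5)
Your proof is correct and follows the same route as the paper: write $A(q^{k+r}a+b)=q^{k+r}(Aa)+Ab$, note that $b<q^k$ and $q^r\ge A$ give $Ab<q^{k+r}$, and conclude that the digit blocks of $Aa$ and $Ab$ do not interact, so $s_q$ splits additively. You merely spell out the carry-free digit-splitting fact that the paper leaves implicit.
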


\begin{proof}
Suppose that $b< q^k$. Then $A b < q^{k+r}$, and consequently
\[
g(q^{k+r}a+b) = s_q(q^{k+r}Aa+Ab) = s_q(Aa) +s_q(Ab) = g(a) + g(b).
\qedhere\]
\end{proof}

Since linear combinations of $q$-quasi-additive functions are
$q$-quasi-additive, 
it directly follows that 
$f(n)$, 
as given by \eqref{eqdeffn}, satisfies
a central limit theorem of the prescribed form. 
This also implies that the vector 
\eqref{eqTh} satisfies a $d$-dimensional central limit theorem with asymptotic mean 
$((q-1)/2,\ldots, (q-1)/2) \cdot \log_q N$.
(This follows 
from the fact that a random vector ${\bf X} = (X_1,\ldots X_d)$ is Gau\ss ian with mean vector $(\mu_1,\ldots, \mu_d)$
and covariance matrix $\Sigma$ if and only if every projection $c_1 X_1 + \cdots + c_d X_d$ with
real $c_1,\ldots, c_d$ is univariate Gau\ss ian with mean $c_1 \mu_1 + \cdots + c_d \mu_d$ and variance
$(c_1, \dots, c_d) \Sigma (c_1, \dots, c_d)^t$). 

It remains to compute the covariance matrix in the case, where
$q$ is a prime number. 

\begin{lemma}\label{Leasymp1}
Let $q\ge 2$ be a prime number, let $A_1,A_2$ be positive integers
that are not 
divisible by $q$, and set $D = {\rm gcd}(A_1,A_2)$. 
Then, uniformly for $(\log N)^{1/3} \le i,j \le \log_q N - (\log N)^{1/3}$
and $a,b\in \{0,1,\ldots,q-1\}$, we have
\begin{multline*}
\frac 1N \# \left\{ n < N : \varepsilon_i(A_1n) = a,\,
\varepsilon_j(A_2n) = b \right\} \\
= \begin{cases}
\frac 1{q^2} + O\left( (\log N)^{-C} \right), & \mbox{if\/ $i\ne j$,} \\
\frac 1{q^2} + \frac{D^2}{A_1 A_2} 
\sum\limits_{\ell \ne 0}  \frac {1} {4\pi^2 \ell^2}
{\left( e\left( -\frac{\ell A_2a}{qD} \right)- e\left( -\frac{\ell
    A_2(a+1)}{qD}  \right)\right)  
\left( e\left( \frac{\ell A_1b}{qD} \right)- e\left( -\frac{\ell
  A_1(b+1)}{qD}  \right)\right)} 
\kern-1.5cm
 & \\
\kern2cm +
O\left( (\log N)^{-C} \right) ,& \mbox{if\/ $i = j$,} 
\end{cases}
\end{multline*}
for any given $C> 0$. Here, $e(x) = e^{2\pi i x}$.
\end{lemma}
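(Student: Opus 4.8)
The plan is to express the counting function through fractional parts and then bound it via exponential sums, in the standard spirit of results on the distribution of digits along arithmetic sequences. Since $\varepsilon_i(m)=a$ is equivalent to $\{m/q^{i+1}\}\in I:=[a/q,(a+1)/q)$ and $\varepsilon_j(m)=b$ to $\{m/q^{j+1}\}\in I':=[b/q,(b+1)/q)$, we have, writing $x_n:=\{A_1n/q^{i+1}\}$, $y_n:=\{A_2n/q^{j+1}\}$,
\[
\rcp N\#\bigl\{n<N:\varepsilon_i(A_1n)=a,\ \varepsilon_j(A_2n)=b\bigr\}=\rcp N\sum_{n<N}\mathbf 1_I(x_n)\,\mathbf 1_{I'}(y_n).
\]
First I would replace each indicator function by the Beurling--Selberg (Vaaler) one-sided trigonometric approximations of degree at most $L$, with $L$ a fixed power of $\log N$ chosen at the end: there are $1$-periodic $\psi_1^\pm,\psi_2^\pm$ with $\psi_1^-\le\mathbf 1_I\le\psi_1^+$, $\psi_2^-\le\mathbf 1_{I'}\le\psi_2^+$, with $\psi_1^+,\psi_2^+\ge0$, with $\widehat{\psi_1^\pm}(0)=\rcp q+\Ord{1/L}$, and with $\widehat{\psi_1^\pm}(\ell)=\widehat{\mathbf 1_I}(\ell)+\Ord{1/L}$ for $\ell\ne0$, where $\widehat{\mathbf 1_I}(\ell)=\frac1{2\pi i\ell}\bigl(e(-\ell a/q)-e(-\ell(a+1)/q)\bigr)$ (and analogously for $\psi_2^\pm$ with $I'$). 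The elementary point to note is that $\widehat{\mathbf 1_I}(\ell)=0$ whenever $q\mid\ell$, so that then $\widehat{\psi_1^\pm}(\ell)=\Ord{1/L}$. Because a product of one-dimensional minorants need not minorise $\mathbf 1_I(x)\mathbf 1_{I'}(y)$, I would take the upper bound directly from $\mathbf 1_I(x)\mathbf 1_{I'}(y)\le\psi_1^+(x)\psi_2^+(y)$, and the lower bound from the identity
\[
\mathbf 1_I\mathbf 1_{I'}=\psi_1^+\psi_2^+-(\psi_1^+-\mathbf 1_I)\psi_2^+-\psi_1^+(\psi_2^+-\mathbf 1_{I'})+(\psi_1^+-\mathbf 1_I)(\psi_2^+-\mathbf 1_{I'})
\]
by dropping the last (non-negative) summand. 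Since $\int(\psi_1^+-\mathbf 1_I)=\Ord{1/L}$, since $\psi_2^+$ is non-negative with $\sum_\ell|\widehat{\psi_2^+}(\ell)|=\Ord{\log L}$, and since the one-dimensional estimate $\rcp N\#\{n<N:\varepsilon_i(A_1n)=a\}=\rcp q+\Ord{1/L}+\Ord{(\log L)\,e^{-c(\log N)^{1/3}}}$ (with $c=\log q$) holds by the very same method, the extra terms coming from the identity contribute only $\Ord{(\log L)/L}+\Ord{(\log L)^2e^{-c(\log N)^{1/3}}}$, and the problem reduces to evaluating $\rcp N\sum_{n<N}\psi_1^+(x_n)\psi_2^+(y_n)$.

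Expanding both trigonometric polynomials gives $\rcp N\sum_{n<N}\psi_1^+(x_n)\psi_2^+(y_n)=\sum_{|\ell_1|,|\ell_2|\le L}\widehat{\psi_1^+}(\ell_1)\widehat{\psi_2^+}(\ell_2)\cdot\rcp N\sum_{n<N}e(\beta n)$ with $\beta=\beta(\ell_1,\ell_2):=\ell_1A_1/q^{i+1}+\ell_2A_2/q^{j+1}$. Set $K:=\max(i,j)$, so $q^{K+1}\beta\in\Z$. If $\beta\notin\Z$ then $q^{K+1}\beta$ is a nonzero integer not divisible by $q^{K+1}$, whence $\|\beta\|\ge q^{-(K+1)}\ge q^{-1}N^{-1}e^{c(\log N)^{1/3}}$ (this is where $K\le\log_qN-(\log N)^{1/3}$ enters), so the inner average is $\Ord{e^{-c(\log N)^{1/3}}}$ and, summed against $\sum_{|\ell|\le L}|\widehat{\psi_1^+}(\ell)|=\Ord{\log L}$, all such terms contribute $\Ord{(\log L)^2e^{-c(\log N)^{1/3}}}$. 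If $\beta\in\Z$ the inner average is $1$, and this is exactly where the primality of $q$ and $q\nmid A_1A_2$ are used: since $|q^{K+1}\beta|=|\ell_1A_1q^{K-i}+\ell_2A_2q^{K-j}|\le L(A_1+A_2)q^{|i-j|}<q^{K+1}$ once $\min(i,j)\ge(\log N)^{1/3}$ and $N$ is large, $\beta\in\Z$ forces $\ell_1A_1q^{K-i}+\ell_2A_2q^{K-j}=0$. If $i\ne j$, say $i<j$ (the case $i>j$ being symmetric), then either $(\ell_1,\ell_2)=(0,0)$, contributing $\widehat{\psi_1^+}(0)\widehat{\psi_2^+}(0)=1/q^2+\Ord{1/L}$, or $v_q(\ell_2)=v_q(\ell_2A_2)=v_q(\ell_1A_1q^{j-i})\ge j-i\ge1$, so that $q\mid\ell_2$ and hence $\widehat{\psi_2^+}(\ell_2)=\Ord{1/L}$; bounding the at most $\Ord L$ such pairs by $\widehat{\psi_1^+}(\ell_1)\cdot\Ord{1/L}$ and summing yields $\Ord{(\log L)/L}$. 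If $i=j$, write $A_1=D\alpha_1$, $A_2=D\alpha_2$ with ${\rm gcd}(\alpha_1,\alpha_2)=1$; then $\ell_1\alpha_1+\ell_2\alpha_2=0$ has solutions precisely $(\ell_1,\ell_2)=(\alpha_2m,-\alpha_1m)$, $m\in\Z$, the term $m=0$ giving $1/q^2+\Ord{1/L}$ and the terms $m\ne0$ giving $\sum_{m\ne0}\widehat{\mathbf 1_I}(\alpha_2m)\,\widehat{\mathbf 1_{I'}}(-\alpha_1m)+\Ord{(\log L)/L}$, the error also absorbing the truncation of the absolutely convergent series at $|m|\asymp L$; a direct computation identifies this series with the expression claimed in the statement.

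Collecting everything and taking $L=(\log N)^{C+1}$, the upper and the lower bound both equal $1/q^2$ --- plus the displayed series when $i=j$ --- with an error $\Ord{(\log N)^{-C}}$, which is the assertion. The step I expect to demand the most care is precisely this two-dimensional sandwiching: one has to pass from the one-sided approximations of the individual intervals to their product, squeeze the lower bound out of the majorant through the four-term identity, and then verify that the $\Ord{1/L}$ perturbations of the Fourier coefficients are just small enough to survive being summed against $\sum_{|\ell|\le L}|\widehat{\mathbf 1_I}(\ell)|\asymp\log L$, while the potentially dangerous ``resonant'' frequencies $(\ell_1,\ell_2)$ with $\beta\in\Z$ are tamed only because $\widehat{\mathbf 1_I}(\ell)$ vanishes for $q\mid\ell$ --- which is exactly the place where the arithmetic hypotheses on $q$ and on $A_1,A_2$ are indispensable.
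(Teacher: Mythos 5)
Your proposal is correct, and its skeleton coincides with the paper's: write the digit conditions as fractional-part conditions, expand in a Fourier series, discard the non-resonant frequencies $\ell_1A_1/q^{i+1}+\ell_2A_2/q^{j+1}\notin\Z$ via the geometric-series bound (this is where the window $(\log N)^{1/3}\le i,j\le \log_qN-(\log N)^{1/3}$ enters), and analyse the resonant ones using the primality of $q$ and $q\nmid A_1A_2$ --- only $(0,0)$ survives for $i\ne j$, and the lattice $(\ell A_2/D,-\ell A_1/D)$ for $i=j$. Where you genuinely differ is in how the non-absolutely-convergent Fourier series is made rigorous: the paper follows Bassily--K\'atai \cite{BK}, replacing the indicator by the convolution-smoothed $\chi_{a,\Delta}$ with $\Delta=(\log N)^{-C}$, invoking a discrepancy estimate for the smoothing error (with the details explicitly left to the reader), and then comparing smoothed and sharp Fourier coefficients; you instead use Selberg--Vaaler one-sided trigonometric approximations of degree $L=(\log N)^{C+1}$ together with the four-term product identity to sandwich the two-dimensional indicator. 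Your route is more self-contained: the finite trigonometric polynomials eliminate all tail issues, the explicit sandwich (dropping the non-negative cross term, and controlling the two mixed terms by the pointwise bound $\psi^+=\Ord{\log L}$ times the one-dimensional estimate) replaces the appeal to a discrepancy estimate, and the error budget $\Ord{(\log L)/L}+\Ord{(\log L)^2e^{-c(\log N)^{1/3}}}$ is worked out in full, at the modest cost of the extra care --- which you supply correctly --- needed because products of one-sided approximants are not themselves one-sided. The paper's smoothing argument, in turn, stays closer to the cited literature and to the polynomial setting of \cite{BK}. One cosmetic remark: the $i=j$ series your computation produces, $\frac{D^2}{A_1A_2}\sum_{\ell\ne0}\frac1{4\pi^2\ell^2}\bigl(e(-\tfrac{\ell A_2a}{qD})-e(-\tfrac{\ell A_2(a+1)}{qD})\bigr)\bigl(e(\tfrac{\ell A_1b}{qD})-e(\tfrac{\ell A_1(b+1)}{qD})\bigr)$, agrees with the displayed one up to an evident sign misprint (already present in the statement and in the paper's own derivation) in the last exponential.
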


\begin{proof}
We adapt the method of \cite{BK} to the present situation. However, in
order to make the presentation more 
transparent, we first present a slightly simplified approach. First we
note that $\varepsilon_j(n) = a$ if and only if $\{n q^{-j-1} \} \in 
\left[  a/q, (a+1)/q \right)$, 
where $\{x\} = x - \lfloor x \rfloor$ denotes the fractional part of
$x$. We also note that the 
Fourier series of the characteristic function ${\bf
  1}_{[a/q,(a+1)/q)}(x)$ is given by 
\[
{\bf 1}_{\left[ \frac aq,\frac{a+1}q\right)}(x) = \sum_{m} d_m(a)
  e(mx) \quad\mbox{with}\quad 
d_m(a) = \begin{cases}
\frac 1q, & \mbox{if $m=0$}, \\
\frac{ e\left( -\frac {ma}q \right) - e\left( -\frac {m(a+1)}q
  \right)}{2\pi i m},& \mbox{if $m\ne 0$}. 
\end{cases}
\]
This Fourier series is not absolutely convergent. This is the major
reason that we have 
to be more precise in a second round. Observe that $d_m(a) = 0$ if
$m\ne 0$ and $m \equiv 0 \bmod q$. 

We have
\begin{multline*}
\# \left\{ n < N : \varepsilon_i(A_1n) = a,\, \varepsilon_j(A_2n) = b \right\}
= \sum_{n< N} {\bf 1}_{\left[ \frac aq,\frac{a+1}q\right)}\left(
  \frac{A_1 n}{q^{i+1}} \right)  
{\bf 1}_{\left[ \frac bq,\frac{b+1}q\right)}\left( \frac{A_2
    n}{q^{j+1}} \right) \\ 
 = \sum_{m_1,m_2} d_{m_1}(a)\,d_{m_2}(b) \sum_{n< N} e\left( \left(
 \frac{A_1 m_1}{q^{i+1}} +\frac{A_2 m_2}{q^{j+1}}  \right) n\right). 
\end{multline*}
Since 
\[
\left|\sum_{n< N} e(\alpha n)\right|\le \frac 2{|1-e(\alpha)|},
\]
we may {\it neglect\/} all exponential sums where
$\alpha = \frac{A_1 m_1}{q^{i+1}} +\frac{A_2 m_2}{q^{j+1}}$
is not an integer.
At this moment, this step is not rigorous since 
the Fourier series is not absolutely convergent. 

Next suppose that $\alpha$ is an integer. 
If $i\ne j$, the number $\frac{A_1 m_1}{q^{i+1}} +\frac{A_2
  m_2}{q^{j+1}}$ can be an integer only if
$m_1 = m_2 = 0$ since we also assume  
that $A_1$ and $A_2$ are not divisible by $q$. 
Thus we should get
\[
\# \left\{ n < N : \varepsilon_i(A_1n) = a,\, \varepsilon_j(A_2n) = b
\right\} =  d_0(a)d_0(b) N + o(N) = \frac N{q^2} + o(N). 
\]
If $i=j$, then the assumption 
$\frac{A_1 m_1}{q^{j+1}} +\frac{A_2 m_2}{q^{j+1}} = k$
for an integer~$k$ leads to
%
$|m_1| \ge \frac {1} {2A_1} |k| q^{i+1} \ge
\frac {1} {2A_1}  |k| q^{( \log N)^{1/3}}$ or $|m_2|
\ge \frac {1} {2A_2} |k| q^{j+1} \ge
\frac {1} {2A_2}  |k| q^{( \log N)^{1/3}}$ 
so that the corresponding terms are negligible (if the Fourier series
would be absolutely convergent). 

Thus we should get (again by observing that all
summands for which $\alpha$ is an integer 
can be put into an error term)
\begin{multline*}
\# \left\{ n < N : \varepsilon_j(A_1n) = a,\, \varepsilon_j(A_2n) = b
  \right\}  
= \sum_{\ell} d_{\ell A_2/D}(a) d_{-\ell A_1/D}(b) \, N + o(N) \\
\quad= \frac {N}{q^2} + \sum_{\ell \ne 0} 
\frac 1{4\pi^2 \ell^2}
{\left( e\left( -\tfrac{\ell A_2a}{qD} \right)- e\left( -\tfrac{\ell
    A_2(a+1)}{qD}  \right)\right)  
\left( e\left( \tfrac{\ell A_1b}{qD} \right)- e\left( -\tfrac{\ell
  A_1(b+1)}{qD}  \right)\right)} 
  \, N\\
 + o(N).
\end{multline*}

In order to make the above heuristics rigorous, 
we proceed as in \cite{BK}. We replace the characteristic
function ${\bf 1}_{[a/q,(a+1)/q)}(x)$ by a smoothed version.
Let $\chi_{a,\Delta}(x)$ be defined by
\[
\chi_{a,\Delta}(x) := \frac 1{\Delta} \int_{-\Delta/2}^{\Delta/2} 
{\bf 1}_{[\frac aq,\frac{a+1}q]}(\{x+z\})\, dz,
\]
The Fourier coefficients of the Fourier 
series $\chi_{a,\Delta}(x) = \sum_{m\in\mathbb{Z}} d_{m,\Delta}(a) e(mx)$
are given by
\[
d_{0,\Delta}(a) = \frac 1q,
\]
and for $m\ne 0$ by 
\[
d_{m,\Delta}(a) =  \frac{e\left(-\frac {ma}q\right)-
e\left(-\frac {m(a+1)}q\right)}{2\pi i m} \cdot
\frac{e\left(\frac {m\Delta}2\right)-e\left(-\frac {m\Delta}2\right)}
{2\pi i m\Delta}.
\]
Note that $d_{m,\Delta}(a) = 0$ if $m\ne 0$ and $m\equiv 0\bmod q$, and that
\[
|d_{m,\Delta}(a)| \le \min\left( \frac 1{\pi |m|},\frac 1{\Delta\pi
  m^2}\right).  
\]
By definition, we have $0\le \chi_{a,\Delta}(x)\le 1$ and
\[
\chi_{a,\Delta}(x) = \begin{cases}
1, & \mbox{if } x\in \left[ \frac aq + \Delta, \frac{a+1}q
  -\Delta\right], \\ 
0, & \mbox{if } x\in [0,1] \setminus \left[ \frac aq - \Delta,
  \frac{a+1}q +\Delta\right]. 
\end{cases}
\]
In particular, we set $\Delta = (\log N)^{-C}$ for some (sufficiently
large) constant $C$. 
Of course we have to take into account all error
terms. The smoothing error can be handled with the help of a
discrepancy estimate (see \cite{BK}). 
Putting the resulting estimates together --- we leave the details to
the reader ---, one obtains  
\begin{align*}
\# \left\{ n < N : \varepsilon_i(A_1n) = a,\, \varepsilon_j(A_2n) = b
\right\} &=  d_{0,\Delta}(a)d_{0,\Delta}(b) N + 
O\left( N (\log N)^{-C} \right) \\
&= \frac{N}{q^2} + O\left( N (\log N)^{-C} \right) 
\end{align*}
for $i\ne j$, and 
\begin{multline*}
\# \left\{ n < N : \varepsilon_j(A_1n) = a,\, \varepsilon_j(A_2n) = b \right\} 
= \sum_{\ell} d_{\ell A_2/D,\Delta}(a) d_{-\ell A_1/D,\Delta}(b) \, N \\
+  O\left( N (\log N)^{-C} \right)
\end{multline*}
for $i=j$,
where all estimates are uniform for $(\log N)^{1/3} \le i,j \le \log_q
N - (\log N)^{1/3}$. 
Since
\[
d_{m,\Delta}(a) = d_m(a)\frac{\sin(\pi m \Delta)}{\pi m \Delta} = 
d_m(a)\left( 1 + O\left( \frac 1{m\Delta} \right) \right) 
\]
for $1\le |m| \le 1/\Delta$, we obtain (with $A = \max\{A_1,A_2\}$)
\begin{multline*}
\sum_{1\le |\ell| \le 1/(A \Delta)} d_{\ell A_2/D,\Delta}(a) d_{-\ell
  A_1/D,\Delta}(b)  
= \sum_{1\le |\ell| \le 1/(A \Delta)} d_{\ell A_2/D}(a) d_{-\ell A_1/D}(b) \\
+ O\left(  \Delta \log(1/\Delta)   \right),
\end{multline*}
and 
\begin{align*}
\sum_{ |\ell| > 1/(A \Delta)} d_{\ell A_2/D,\Delta}(a) d_{-\ell
  A_1/D,\Delta}(b)  
&= O\left( \Delta \right),\\
\sum_{ |\ell| > 1/(A \Delta)} d_{\ell A_2/D}(a) d_{-\ell A_1/D}(b) 
&= O\left( \Delta \right).
\end{align*}
Thus, 
\begin{equation*}
\sum_{\ell} d_{\ell A_2/D,\Delta}(a) d_{-\ell A_1/D,\Delta}(b) 
= \sum_{\ell} d_{\ell A_2/D}(a) d_{-\ell A_1/D}(b) 
+ O\left( \frac{\log\log N}{(\log N)^C}  \right)
\end{equation*}
This completes the proof of the lemma.
\end{proof}

It is now not difficult to complete the computation of the covariance matrix
(which also completes the Proof of Theorem~\ref{ThCLT}).
By definition, the covariance of $s_q(A_1n)$ and $s_q(A_2n)$ is given by
\[
\Cov = \frac 1N \sum_{n<N} s_q(A_1n) s_q(A_2n) -  \frac 1N \sum_{n<N}
s_q(A_1n) \cdot \frac 1N \sum_{n<N} s_q(A_2n). 
\]
In order to apply Lemma~\ref{Leasymp1}, we neglect the digits
$\varepsilon_j$ with $j\le (\log N)^{1/3}$ 
or $j \ge  \log_q(N) -(\log N)^{1/3}$ and 
denote by $\overline s_q$ the sum of digits of the remaining digits
$\varepsilon_j$ with 
$(\log N)^{1/3} <  j < \log_q N - (\log N)^{1/3}$. Then the
corresponding approximate covariance $\overline{\Cov}$ satisfies 
\[
\Cov - \overline{\Cov} = O\left( (\log N)^{5/6} \right),
\]
which can be shown with the help of the Cauchy--Schwarz inequality.
Hence, by rewriting $\overline{\Cov}$ with the help of the numbers 
$\frac 1N \# \left\{ n < N : \varepsilon_i(A_1n) = a,\,
\varepsilon_j(A_2n) = b \right\}$ (from Lemma~\ref{Leasymp1}) and  
the numbers
\[
\frac 1N \# \left\{ n < N : \varepsilon_j(A_i n) = a \right\} = \frac
1q + O\left( (\log N)^{-C} \right) 
\]
(note that the fact that this asymptotic property 
holds uniformly for 
$(\log N)^{1/3}\le j\le \log_q N - (\log N)^{1/3}$, $a\in
\{0,1,\ldots, q-1\}$,  
and $i = 1,2$ follows from  Lemma~\ref{Leasymp1}), we immediately get 
\begin{align*}
\overline {\Cov} &= L \frac{D^2}{A_1 A_2} \sum_{a,b=0}^{q-1}  ab
\sum_{\ell \not \equiv 0 \bmod q} 
\frac 1{4\pi^2 \ell^2}\\
&\kern4cm
\cdot
{\left( e\left( -\tfrac{\ell A_2a}{qD} \right)- e\left( -\tfrac{\ell
    A_2(a+1)}{qD}  \right)\right)  
\left( e\left( \tfrac{\ell A_1b}{qD} \right)- e\left( -\tfrac{\ell
  A_1(b+1)}{qD}  \right)\right)} 
 \\
&\kern4cm
+ O\left( (\log N)^{2-C} \right) \\
&= L \frac{D^2}{A_1 A_2} \frac{q^2}{4 \pi^2}  \sum_{\ell \not \equiv 0
  \bmod q} \frac 1{\ell^2}  
+ O\left( (\log N)^{2-C} \right) \\
&= L \frac{D^2}{A_1 A_2} \frac{q^2-1}{12} + O\left( (\log N)^{2-C} \right),
\end{align*}
where $L = \lfloor \log_q N - 2(\log N)^{1/3} \rfloor$, 
and where we have used the identity
\[
\sum_{a=0}^{q-1} a\,  e(a k/q) = \frac q{e(k/q) -1},
\] 
which is valid for integers $k$ that are not divisible by $q$.
We can choose $C$ appropriately --- for example $C = 2$ --- and
finally obtain 
\[
\Cov = \frac {q^2-1}{12} \frac{{\rm gcd}(A_i,A_j)^2}{A_i A_j}\log_q N
+ O\left( (\log N)^{5/6} \right), 
\]
which completes the proof of Theorem~\ref{ThCLT}.

\section{Asymptotic divisibility of Catalan-like integer sequences}
\label{sec2}

The main result in this section concerns divisibility of
``Catalan-like" sequences by prime powers.

\begin{theo} \label{thm:almostall}
Let $p$ be a given prime number, $\alpha$ a positive integer, 
$P(n)$ a polynomial in~$n$
with integer coefficients, and
$(C_i)_{1\le i\le r}$,
$(D_i)_{1\le i\le s}$,
$(E_i)_{1\le i\le t}$, 
$(F_i)_{1\le i\le t}$ given integer sequences with $C_i,D_i>0$
and $p\nmid\gcd(E_i,F_i)$ for all~$i$, $\sum_{i=1}^rC_i=
\sum_{i=1}^sD_i$, and\break
$\{C_i:1\le i\le r\}\ne\{D_i:1\le i\le s\}$.
If all elements of the sequence $\big(S(n)\big)_{n\ge0}$, defined by
\begin{equation} \label{eq:Cat}
S(n):=\frac {P(n)} {
\prod _{i=1} ^{t}(E_in+F_i)}
\frac {
\prod _{i=1} ^{r}(C_in)!} {
\prod _{i=1} ^{s}(D_in)!} ,
\end{equation}
are integers, then 
\begin{equation} \label{eq:asydiv} 
\lim_{N\to\infty}\frac {1} {N}\# \left\{ n < N : 
S(n)\equiv0~(\text{\em mod }p^\alpha)\right\}=1.
\end{equation}
\end{theo}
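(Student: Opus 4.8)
The plan is to turn the divisibility claim into a lower bound on the $p$-adic valuation of $S(n)$ and to feed a linear combination of sum-of-digits functions into Theorem~\ref{ThCLT}. If $P\equiv0$ then $S\equiv0$ and there is nothing to prove, so assume $P\not\equiv0$; then $S(n)\ne0$ for all but finitely many~$n$. By Legendre's formula~\eqref{eq:Leg} and the hypothesis $\sum_iC_i=\sum_iD_i$, the linear terms cancel, so, with $g_p(n):=\sum_{i=1}^ss_p(D_in)-\sum_{i=1}^rs_p(C_in)$,
\[
v_p\big(S(n)\big)=v_p\big(P(n)\big)-\sum_{i=1}^tv_p(E_in+F_i)+\frac{g_p(n)}{p-1}.
\]
Since $s_p(p^km)=s_p(m)$, pulling the $p$-part out of each $C_i,D_i$ lets us write $g_p(n)=\sum_kc_k'\,s_p(B_kn)$ with the $B_k$ distinct and coprime to~$p$ and $\sum_kc_k'=s-r$. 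As $S(n)\in\mathbb Z$ we have $v_p(S(n))\ge0$, and $p^\alpha\mid S(n)\iff v_p(S(n))\ge\alpha$; so \eqref{eq:asydiv} follows once the distribution of $g_p$ and the negligibility of the two error terms are understood.

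The heart of the matter is to show that the hypotheses force $s>r$, so that $g_p(n)$ has positive asymptotic mean $\tfrac{p-1}2(s-r)\log_pN\to\infty$. Pick a prime $\ell$ dividing none of the $C_i,D_i$, and write $g_\ell(n):=\sum_is_\ell(D_in)-\sum_is_\ell(C_in)=\sum_{j=1}^dc_js_\ell(A_jn)$, where $A_1,\dots,A_d$ are the distinct values among the $C_i,D_i$ (automatically coprime to~$\ell$) and $c_j$ is the multiplicity of $A_j$ among the $D_i$ minus that among the $C_i$; since $\{C_i\}\ne\{D_i\}$ as sets the two multisets differ, hence $\mathbf c=(c_1,\dots,c_d)\ne\mathbf 0$. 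By Theorem~\ref{ThCLT}, $g_\ell(n)$ satisfies a one-dimensional central limit theorem with mean $\tfrac{\ell-1}2(s-r)\log_\ell N$ and variance $\tfrac{\ell^2-1}{12}\,\mathbf cG\mathbf c^{\,t}\log_\ell N$, $G=\big(\gcd(A_i,A_j)^2/(A_iA_j)\big)_{i,j}$. For distinct $A_j$ the matrix $G$ is positive definite --- with $J_2$ Jordan's totient, so $\sum_{d\mid m}J_2(d)=m^2$, $G$ is the Gram matrix of the linearly independent vectors $\big(A_i^{-1}\sqrt{J_2(d)}\,\mathbf 1[d\mid A_i]\big)_d$ --- hence $\mathbf cG\mathbf c^{\,t}>0$. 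Now $v_\ell(S(n))\ge0$ and $\sum_iv_\ell(E_in+F_i)\ge0$ give $g_\ell(n)\ge-(\ell-1)v_\ell(P(n))$ for all large~$n$; and since $P\not\equiv0$, the density of $\{n<N:v_\ell(P(n))>M\}$ tends to $0$ as $M\to\infty$, so for a suitable $M$ it is $<\varepsilon$. If $s\le r$, the CLT mean is $\le0$ while the variance is positive, so the CLT forces $g_\ell(n)<-(\ell-1)M$ on a set of density $\ge\tfrac12-o(1)$ --- contradicting the preceding sentence once $\varepsilon<\tfrac13$ and $N$ is large. Hence $s>r$.

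To conclude: since $s>r$, the coefficient vector of $g_p$ has coordinate sum $s-r\ne0$ and is in particular nonzero, so by Theorem~\ref{ThCLT} $g_p(n)$ obeys a one-dimensional central limit theorem with mean $\mu\log_pN$, $\mu:=\tfrac{p-1}2(s-r)>0$, and positive variance. Fix $\varepsilon>0$. Using $p\nmid\gcd(E_i,F_i)$, the event $v_p(E_in+F_i)\ge k$ has density $\le p^{-k}$ (if $p\mid E_i$ then $p\nmid F_i$ forces $v_p(E_in+F_i)=0$), so for a suitable $M$ the set $\{n<N:\sum_iv_p(E_in+F_i)>M\}$ has density $<\varepsilon$; and since $\mu>0$, the central limit theorem gives $g_p(n)\ge(p-1)(\alpha+M)$ off a set of density $o(1)$. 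For $n$ outside both exceptional sets and large enough that $S(n)\ne0$,
\[
v_p(S(n))=v_p(P(n))-\sum_iv_p(E_in+F_i)+\tfrac{g_p(n)}{p-1}\ge0-M+(\alpha+M)=\alpha,
\]
i.e.\ $p^\alpha\mid S(n)$. Therefore $\frac1N\#\{n<N:p^\alpha\mid S(n)\}\ge1-\varepsilon-o(1)$, and $\varepsilon\to0$ gives~\eqref{eq:asydiv}.

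The step I expect to be the real obstacle is forcing $s>r$: it is the only place the integrality hypothesis is genuinely used, and it relies on the \emph{explicit} covariance of Theorem~\ref{ThCLT} at the auxiliary prime $\ell$ together with the non-degeneracy of the gcd-matrix $G$ --- without the latter the limiting Gaussian could be a point mass and the constraint $v_\ell(S(n))\ge0$ would not pin down the sign of $s-r$. Everything else (Legendre's formula, the negligibility of $v_p(P(n))$ and of $\sum_iv_p(E_in+F_i)$, and the final tail estimate) is routine.
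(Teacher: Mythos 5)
Your proof is correct, and while its skeleton (Legendre's formula, cancellation of the linear terms via $\sum_iC_i=\sum_iD_i$, a CLT for the digit-sum combination, and control of the valuations of the polynomial prefactor) matches the paper's, two ingredients are genuinely different. First, the paper obtains the crucial inequality $r<s$ by citing Bober's Lemma~3.5 and its proof, whereas you derive it self-containedly: you pass to an auxiliary prime $\ell$ coprime to all $C_i,D_i$, use the explicit covariance of Theorem~\ref{ThCLT} together with the positive definiteness of the matrix $\bigl(\gcd(A_i,A_j)^2/(A_iA_j)\bigr)$ (your Jordan-totient Gram argument is the classical Smith/Beslin--Ligh device and is correct), and then play the resulting nondegenerate Gaussian fluctuation against the constraint $v_\ell(S(n))\ge 0$. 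This is arguably an improvement in rigor, since it handles the rational prefactor $P(n)/\prod_i(E_in+F_i)$ directly, a point on which the paper itself hedges (``we do not believe that the polynomial $P(n)$ can `correct' non-integrality\dots''). Second, for the concentration step the paper bundles everything into one random variable $T(n)$ and applies Chebyshev, which forces it to bound $\V_N(T)$ and in particular the covariances $\Cov_N\bigl(s_p(Cn),v_p(En+F)\bigr)$ (its Lemmas~\ref{lem:1} and~\ref{lem:2}); you instead split off the set where $\sum_iv_p(E_in+F_i)$ is large by a Markov/tail bound and apply the CLT only to the digit-sum combination, so you never need the covariance lemma. Both routes work; yours trades the second-moment bookkeeping for an exceptional-set argument. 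Two small points you assert without proof and should justify in a final write-up: the uniform-in-$N$ decay of the density of $\{n<N:v_\ell(P(n))>M\}$ as $M\to\infty$ (a standard bound on the number of roots of $P$ modulo prime powers, or a first-moment estimate, suffices), and the phrase ``density $\ge\tfrac12-o(1)$'' should formally be ``$\ge\Phi(-t)-o(1)$ for each fixed $t>0$'', which is all your contradiction needs.
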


We note that (\ref{eq:asydiv}) remains true if $\alpha$
increases slowly with $N$. In particular, we can choose
$\alpha = \lfloor \eta \log N \rfloor$ for an appropriate $\eta > 0$.
We will actually show 
in the proof of Theorem~\ref{thm:almostall} that
\[
\# \left\{ n < N : v_p( S(n)) < \eta \log N \right\} = O\left( \frac{N}{\sqrt{\log N}} \right)
\]
if $\eta > 0$ is sufficiently small.

Furthermore we note that the assumption $p\nmid\gcd(E_i,F_i)$ is not
really necessary since we can always reduce the problem to this case by
separating the factors $p^{v_p(\gcd(E_i,F_i))}$.
Thus, we immediately obtain the following corollary.

\begin{cor} \label{cor:almostall}
Let $S(n)$ be given as in Theorem~\ref{thm:almostall}
(without assuming the condition $p\nmid\gcd(E_i,F_i)$).
Then, for all positive integers $m$, we have
\begin{equation} \label{eq:asydiv-2} 
\lim_{N\to\infty}\frac {1} {N}\# \left\{ n < N : 
S(n)\equiv0~(\text{\em mod }m)\right\}=1.
\end{equation}
\end{cor}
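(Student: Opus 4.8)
The plan is to deduce Corollary~\ref{cor:almostall} from Theorem~\ref{thm:almostall} by two elementary reductions: first to the case of a prime‑power modulus, and then to the situation in which $p\nmid\gcd(E_i,F_i)$ for all~$i$. First I would write $m=\prod_{k}p_k^{\beta_k}$ for the prime factorisation of~$m$. Since there are only finitely many primes dividing~$m$, and since
\[
\{n<N:\ S(n)\not\equiv0\ (\text{mod }m)\}\subseteq\bigcup_{k}\{n<N:\ p_k^{\beta_k}\nmid S(n)\},
\]
it suffices to prove that for each prime power $p^{\beta}$ occurring in the factorisation of~$m$ the set $\{n<N:\ p^{\beta}\nmid S(n)\}$ has density~$0$ as $N\to\infty$.

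Next I would remove the extra hypothesis $p\nmid\gcd(E_i,F_i)$, as indicated in the remark preceding the corollary. Fix a prime~$p$, put $\gamma_i:=\min\big(v_p(E_i),v_p(F_i)\big)=v_p(\gcd(E_i,F_i))$, and write $E_i=p^{\gamma_i}E_i'$, $F_i=p^{\gamma_i}F_i'$ with integers $E_i',F_i'$ satisfying $p\nmid\gcd(E_i',F_i')$ (assuming, as we may, that no linear factor is identically zero). Setting $\Gamma:=\gamma_1+\cdots+\gamma_t$ and
\[
\widetilde S(n):=p^{\Gamma}S(n)=\frac{P(n)}{\prod_{i=1}^{t}(E_i'n+F_i')}\,\frac{\prod_{i=1}^{r}(C_in)!}{\prod_{i=1}^{s}(D_in)!},
\]
one checks at once that $\widetilde S(n)$ again has the shape~\eqref{eq:Cat}: the polynomial~$P$ and the sequences $(C_i)$, $(D_i)$ are unchanged, so $\sum_iC_i=\sum_iD_i$ and $\{C_i\}\neq\{D_i\}$ still hold, the denominator is still a product of linear factors, and now $p\nmid\gcd(E_i',F_i')$ for all~$i$. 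Moreover $\widetilde S(n)=p^{\Gamma}S(n)$ is an integer for every~$n$ because $S(n)$ is. Hence $\widetilde S$ satisfies all hypotheses of Theorem~\ref{thm:almostall} for the prime~$p$.

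Finally I would apply Theorem~\ref{thm:almostall} to $\widetilde S$ with the exponent $\beta+\Gamma$ in place of~$\alpha$: it gives
\[
\lim_{N\to\infty}\frac1N\#\{n<N:\ v_p(\widetilde S(n))\ge\beta+\Gamma\}=1.
\]
Since $v_p(S(n))=v_p(\widetilde S(n))-\Gamma$, this says precisely that $p^{\beta}\mid S(n)$ for a set of~$n$ of density~$1$, i.e.\ that $\{n<N:\ p^{\beta}\nmid S(n)\}$ has density~$0$. Combining this with the first paragraph over the finitely many prime powers appearing in~$m$ yields~\eqref{eq:asydiv-2}.

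I do not expect any serious obstacle here; the whole argument is just a packaging of Theorem~\ref{thm:almostall}. The only point needing a little care is the second reduction: one must ensure that dividing out the common $p$‑power from each linear factor leaves a sequence that still falls literally under the hypotheses of Theorem~\ref{thm:almostall} — in particular that it remains integer‑valued (automatic, being $p^{\Gamma}$ times the integer $S(n)$) and that the non‑triviality condition $\{C_i\}\neq\{D_i\}$ is untouched (which it is, since the factorials are not modified).
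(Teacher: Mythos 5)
Your proposal is correct and follows essentially the same route as the paper: the paper's justification is precisely the remark preceding the corollary, namely reducing to the case $p\nmid\gcd(E_i,F_i)$ by separating the factors $p^{v_p(\gcd(E_i,F_i))}$ and then invoking Theorem~\ref{thm:almostall} for each prime power dividing~$m$. You merely spell out the details (the shift of the exponent by $\Gamma$ and the finite union bound over the prime powers of~$m$) that the paper leaves implicit.
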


We call integer sequences of the form as in \eqref{eq:Cat}
--- that is, integer sequences given by a product/quotient of
factorials multiplied by a rational function ---
``Catalan-like" since the Catalan numbers $\frac {1} {n+1}\binom
{2n}n$ represent obviously such a sequence, but as well many
other sequences that one finds in the literature (and in the
On-Line Encyclopedia of Integer Sequences \cite{OEIS}).

\begin{exs}
All of the following sequences are ``Catalan-like" in the sense
of \eqref{eq:Cat}.

\medskip
(1) {\it Binomial coefficients} such as the {\it central binomial
    coefficients} $\binom {2n}n$, or more generally $\binom
  {(a+b)n}{an}$ for positive integers $a$ and~$b$, including
variations such as $\binom {2n}{n-1}$, etc.

\medskip
(2) {\em Multinomial coefficients} such as $\frac {((a_1+a_2+\dots+a_s)n)!} 
{(a_1n)!\,(a_2n)!\cdots(a_sn)!}$, etc.

\medskip
(3) {\it Fu\ss--Catalan numbers}. These are defined by
(cf.\ \cite[pp.~59--60]{ArmDAA})
$\frac {1} {n}\binom {(m+1)n}{n-1}$, where $m$ is a given positive integer.

(4) Gessel's \cite{GessAW} {\it super ballot numbers}
(often also called {\it super-Catalan numbers})\break $\frac {(2n)!\,(2m)!} 
{n!\,m!\,(m+n)!}$ for non-negative integers~$m$, or for $m=an$
with $a$ a positive integer.

(5) Many counting sequences in {\it tree} and {\it map enumeration}
(cf.\ \cite{SchaAB} for a survey) such as
$\frac{m+1}{n((m-1)n +2)} \binom{mn}{n-1}$ 
($m$-ary blossom trees with $n$ white nodes;
cf.\ \cite[Sec.~3]{SchaAA}), $\frac {2\cdot3^n} {(n+2)(n+1)}\binom {2n}n$
(number of rooted planar maps with $n$ edges; cf.\ \cite{TuttAC}),
$\frac {2} {(3n-1)(3n-2)}\binom {3n-1}n$ (number of rooted non-separable 
planar maps with $n$ edges; cf.\ \cite{BrowAA}),
$\frac {2} {(3n+1)(n+1)}\binom {4n+1}n$ (number of rooted planar
triangulations with $n+3$ vertices; cf.\ \cite{TuttAA}),
$\frac {1} {2(n+2)(n+1)}\binom {2n}n\binom {2n+2}{n+1}$ (number of 
rooted Hamiltonian maps with $2n$ vertices; cf.\ \cite{TuttAB}),
to mention just a few.
\end{exs}

What Theorem~\ref{thm:almostall} says is that, for any of these
sequences, most elements (in the sense that the proportion of
those in the set of all elements tends to~$1$) are divisible by~$p^\alpha$, for
a given prime number~$p$ and given positive integer~$\alpha$.

We should at this point remind the reader of {\it Landau's criterion}
\cite{LandAA}, which says that
$$
U(n):=\frac {
\prod _{i=1} ^{r}(C_in)!} {
\prod _{i=1} ^{s}(D_in)!} 
$$
is an integer for all $n$ if and only if 
\begin{equation} \label{eq:Landau} 
\sum_{i=1}^r\fl{C_ix}
-\sum_{i=1}^s\fl{D_ix}\ge0
\end{equation}
for all real numbers~$x$. (Here, we still assume that $\sum_{i=1}^rC_i=
\sum_{i=1}^sD_i$.)
In view of \cite[Lemma~3.3]{BobeAA}, which says that if $U(n)$
is non-integral for some~$n$ then, for almost all primes~$p$,
there exists an~$n$ such that $v_p\big(U(n)\big)<0$, this means
(more or less; we do not believe that the polynomial $P(n)$
can ``correct" non-integrality of $U(n)$ for all~$n$) that \eqref{eq:Landau}
is an implicit assumption in Theorem~\ref{thm:almostall}.

\medskip
For the proof of Theorem~\ref{thm:almostall}, we consider
the integer interval $[0,N-1]$ as a probability space, with each
integer equally likely, precisely as in Section~\ref{sec1}.
For notational convenience, the corresponding probability function will
be denoted by $\PP_N$.
Functions on the non-negative integers 
are then interpreted as random variables $X$ on this space by
restricting them to $[0,N-1]$. The expectation of $X$ on
the space, that is, $\frac {1} {N}\sum_{i=0}^{N-1}X(i)$, will be
denoted by $\E_N(X)$, the variance will be denoted by $\V_N(X)$,
and the covariance of two functions by $\Cov_N(X,Y)$.

We need two auxiliary lemmas.
The first concerns asymptotic mean and variance for the $p$-adic
valuation of a linear function.

\begin{lemma} \label{lem:1}
Let $E$ and $F$ be integers, $E>0$, not both divisible by the prime~$p$.
If $v_p(En+B)$ is considered as a random variable for $n$
in the integer interval $[0,N-1]$, then
\begin{equation} \label{eq:linearE} 
\E_N\!\big(v_p(En+F)\big)=\begin{cases} 
0,&\text{if\/ }p\mid E,\\
\frac {1} {p-1}+o(1),&\text{if\/ }p\nmid E,
\end{cases}
\quad \quad \text{as }N\to\infty,
\end{equation}
and
\begin{equation} \label{eq:linearV} 
\V_N\!\big(v_p(En+F)\big)=\begin{cases} 
0,&\text{if\/ }p\mid E,\\
\frac {p} {(p-1)^2}+o(1),&\text{if\/ }p\nmid E,
\end{cases}
\quad \quad \text{as }N\to\infty.
\end{equation}
\end{lemma}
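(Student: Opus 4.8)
The plan is to reduce everything to the elementary observation that $v_p(En+F)$ depends only on the residue class of $n$ modulo a suitable power of~$p$, and then to compute the mean and variance by summing a geometric-type series. First I would dispose of the trivial case $p\mid E$: since $p\nmid\gcd(E,F)$ we then have $p\nmid F$, hence $p\nmid(En+F)$ for every~$n$, so $v_p(En+F)=0$ identically and both the mean and the variance vanish exactly (no error term needed). So assume $p\nmid E$. Then $n\mapsto En+F$ is a bijection on $\Z/p^k\Z$ for every~$k$, and consequently, \emph{among any block of $p^k$ consecutive integers~$n$}, the value $En+F$ runs through a complete residue system mod~$p^k$; in particular the number of $n$ in such a block with $v_p(En+F)\ge m$ equals $p^{k-m}$ for $m\le k$, i.e.\ the ``density'' of $\{n: v_p(En+F)\ge m\}$ is exactly $p^{-m}$.

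Next I would make this precise for the interval $[0,N-1]$, which is not in general a union of full blocks. Writing $N=Qp^{K}+R$ with $0\le R<p^{K}$ and $K=K(N)\to\infty$ slowly (say $p^{K}\le\sqrt N$), the interval decomposes into $Q$ full blocks of length $p^{K}$ plus a remainder of length $R<p^{K}$. On the full blocks the counting is exact as above; the remainder contributes at most $p^{K}=O(\sqrt N)$ to any of the relevant counts, which is $o(N)$. Hence for each fixed~$m$ with $1\le m\le K$,
\[
\PP_N\big(v_p(En+F)\ge m\big)=p^{-m}+O\!\big(p^{K}/N\big)=p^{-m}+o(1),
\]
and moreover $\PP_N\big(v_p(En+F)\ge m\big)\le p^{-m}+p^{K}/N$ uniformly in~$m$, which gives the tail bound needed to control the infinite sums.

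Then I would assemble the moments from the tail probabilities. Using $\E_N(X)=\sum_{m\ge1}\PP_N(X\ge m)$ for a nonnegative integer-valued~$X$, the main term is $\sum_{m\ge1}p^{-m}=\frac1{p-1}$, and the tail $m>K$ contributes $O(\sum_{m>K}(p^{-m}+p^{K}/N))=o(1)$ once $K$ is chosen so that, say, $p^{K}=N^{1/3}$; this yields \eqref{eq:linearE}. For the variance I would use $\E_N(X^2)=\sum_{m\ge1}(2m-1)\PP_N(X\ge m)$, whose main term is $\sum_{m\ge1}(2m-1)p^{-m}=\frac{p+1}{(p-1)^{2}}$ (a standard evaluation via $\sum m x^{m}=x/(1-x)^{2}$), and subtract $\E_N(X)^2=\frac1{(p-1)^2}+o(1)$, giving $\V_N=\frac{p+1}{(p-1)^2}-\frac1{(p-1)^2}+o(1)=\frac{p}{(p-1)^2}+o(1)$, which is \eqref{eq:linearV}. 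The only thing requiring a little care — the main obstacle, such as it is — is the uniformity in~$m$ of the error term: one must make sure the weights $2m-1$ do not destroy the $o(1)$, which is why truncating the tail at $m=K$ with $p^{K}$ a small power of~$N$ (so that both the boundary-block error $Kp^{K}/N$ and the geometric tail $\sum_{m>K}mp^{-m}$ are $o(1)$) is the right bookkeeping device.
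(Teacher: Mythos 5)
Your proposal is correct and takes essentially the same route as the paper: the unique residue class of solutions to $En+F\equiv 0 \pmod{p^\ell}$ (for $p\nmid E$) gives $\PP_N(v_p\ge\ell)=p^{-\ell}+o(1)$, and the mean and second moment are then assembled from the tail sums $\sum_\ell \PP_N(X\ge\ell)$ and $\sum_\ell(2\ell-1)\PP_N(X\ge\ell)$, exactly as in the paper. The only difference is cosmetic: the paper records the count directly as $N/p^\ell+O(1)$ and truncates at $\ell\le\lfloor\log_p N\rfloor$, whereas you reach the same estimate via a block decomposition with $p^K\approx N^{1/3}$.
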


\begin{proof}
The first assertion in the case distinction in \eqref{eq:linearE} is
obvious since our assumptions imply that
$En+F\hbox{${}\not\equiv{}$}0$~(mod~$p$) if $p\mid E$. 
If $p\nmid E$, then the congruence $En+F\equiv 0$~(mod~$p^\alpha$)
has a unique solution for $n$ modulo~$p^\alpha$ for any given
positive integer~$\alpha$. Thus, we have
$$
\E_N\!\big(v_p(an+b)\big)=\frac {1} {N}
\sum_{\ell= 1}^{ \lfloor  \log_p N \rfloor}\left(\frac {N} {p^\ell} +
O(1)\right) 
=\frac {1} {p-1}+o(1),\quad \quad \text{as }N\to\infty.
$$
Similarly, still assuming $p\nmid E$, for the variance we have
\begin{align*}
\V_N\!\big(v_p(an+b)\big)&=\frac {1} {N}
\sum_{\ell = 1}^{ \lfloor  \log_p N \rfloor}(2\ell-1)\left(\frac {N}
    {p^\ell} + O(1)\right) 
-\Big(\E_N\!\big(v_p(an+b)\big)\Big)^2\\
&=\frac {p+1} {(p-1)^2}-\frac {1} {(p-1)^2}+o(1),
\quad \quad \text{as }N\to\infty,
\end{align*}
establishing also \eqref{eq:linearV}.
\end{proof}

The second auxiliary lemma provides an asymptotic upper bound on the
covariance of a linear function and the sum-of-digits function
of a linear function.

\begin{lemma} \label{lem:2}
Let $C$, $E$, and $F$ be integers, $C,E>0$, 
and $E$ and $F$ not both divisible by $p$. 
If $s_p(Cn)$ and $v_p(En+B)$ are considered as random variables for $n$
in the integer interval $[0,N-1]$, then
\begin{equation} \label{eq:cov} 
\Cov_N(s_p(Cn),v_p(En+F))= O\!\left(\log^{1/2}_p(N)\right), \quad \quad 
\text{as }N\to\infty.
\end{equation}
\end{lemma}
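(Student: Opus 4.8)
The plan is to bound the covariance by the Cauchy--Schwarz inequality, but this alone only gives $O(\log N)$ (since $s_p(Cn)$ has standard deviation of order $\log^{1/2}_p N$ and $v_p(En+F)$ has standard deviation $O(1)$ by Lemma~\ref{lem:1}); we need to do better and extract the $\log^{1/2}_p N$ bound. The key idea is that $v_p(En+F)$ is, with overwhelming probability, \emph{very small} --- of size $O(\log\log N)$ --- so that even though $s_p(Cn)$ is large, the product $s_p(Cn)\cdot v_p(En+F)$ is effectively controlled by the small fluctuations of the digit sum over a short range. More precisely, I would first center both variables, writing $\widetilde s = s_p(Cn) - \E_N(s_p(Cn))$ and $\widetilde v = v_p(En+F) - \E_N(v_p(En+F))$, so that $\Cov_N = \E_N(\widetilde s\,\widetilde v)$.

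The main step is to exploit that $v_p(En+F) = \ell$ forces $n$ to lie in a fixed residue class modulo $p^\ell$ (when $p\nmid E$; the case $p\mid E$ is trivial since then $\widetilde v \equiv 0$). The plan is to split according to the value of $v_p(En+F)$: for $\ell$ up to some threshold $L = \lceil 2\log_p\log N\rceil$, the set $\{n < N : v_p(En+F) = \ell\}$ has density $\approx (p-1)/p^{\ell+1}$, and one can show that on this set the \emph{conditional} expectation of $s_p(Cn)$ differs from its unconditional expectation by only $O(\ell)$ --- because fixing $n$ modulo $p^\ell$ only pins down the bottom $O(\ell)$ base-$p$ digits of $Cn$, leaving the remaining $\approx \log_p N - O(\ell)$ digits distributed essentially as before. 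Summing $\sum_\ell \frac{p-1}{p^{\ell+1}} \cdot O(\ell) \cdot O(\ell)$ over $\ell \le L$ gives a bounded contribution times something like $L^2 = O((\log\log N)^2)$, which is $o(\log^{1/2}_p N)$. For the tail $\ell > L$, the event $\{v_p(En+F) > L\}$ has probability $O(p^{-L}) = O((\log N)^{-2})$, and on it $|\widetilde s| = O(\log_p N)$ while $|\widetilde v| = O(\log_p N)$, so by Cauchy--Schwarz (or crude bounding) this tail contributes $O((\log N)^{-2}) \cdot O(\log N) \cdot O(\log N) = O(1)$.

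The main obstacle I anticipate is making rigorous the claim that conditioning on $n$ modulo $p^\ell$ perturbs $\E(s_p(Cn))$ by only $O(\ell)$: one needs that the high-order digits of $Cn$ behave almost independently of the low-order residue, which is exactly the kind of equidistribution statement underlying Lemma~\ref{Leasymp1} (or the estimates of \cite{BK}). Concretely, I would write $s_p(Cn) = s_p(Cn \bmod p^{M}) + s_p(\lfloor Cn/p^M \rfloor)$ for a cutoff $M$ slightly larger than $\ell$, note the first summand is $O(M)$, and apply a single-variable version of the digit-equidistribution estimate to the second summand, conditioned on the fixed residue class --- the conditioning only shifts $n$ by an arithmetic progression, under which the relevant exponential sums are still small. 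Alternatively, and perhaps more cleanly, one can avoid conditioning entirely by expanding $v_p(En+F) = \sum_{\ell\ge1} \mathbf{1}[p^\ell \mid En+F]$ and writing $\Cov_N(s_p(Cn), v_p(En+F)) = \sum_{\ell\ge1} \Cov_N(s_p(Cn), \mathbf{1}[p^\ell\mid En+F])$, then bounding each term by $O(\ell)$ for $\ell$ small and by the (exponentially small in $\ell$) variance of the indicator times $O(\log_p N)$ for $\ell$ large; the series then converges to $O((\log\log N)^2) = o(\log^{1/2}_p N)$, which suffices. This is the route I would follow, as it reduces everything to estimating $\Cov_N(s_p(Cn), \mathbf{1}[p^\ell \mid En+F])$, a quantity directly amenable to the Fourier-analytic method already developed in the proof of Lemma~\ref{Leasymp1}.
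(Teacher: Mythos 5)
Your opening premise is a miscalculation, and it sends you down an unnecessarily long road. Cauchy--Schwarz gives
$\vert\Cov_N(s_p(Cn),v_p(En+F))\vert\le \V_N(s_p(Cn))^{1/2}\,\V_N(v_p(En+F))^{1/2}$.
Since $\V_N(s_p(Cn))=\V_N(s_p(Cp^{-v_p(C)}n))=O(\log_p N)$ (this is the variance implicit in \eqref{eq:clt}, respectively in Theorem~\ref{ThCLT} with $q=p$), while $\V_N(v_p(En+F))=O(1)$ by Lemma~\ref{lem:1}, the product of the two standard deviations is $O(\log_p^{1/2}N)\cdot O(1)=O(\log_p^{1/2}N)$ --- not $O(\log N)$ as you assert. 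This one-line application of Cauchy--Schwarz is precisely the paper's proof of the lemma, so the whole apparatus you construct afterwards is not needed.

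As for the alternative route itself: expanding $v_p(En+F)=\sum_{\ell\ge1}\mathbf{1}[p^\ell\mid En+F]$ and estimating each $\Cov_N\bigl(s_p(Cn),\mathbf{1}[p^\ell\mid En+F]\bigr)$ would, if carried out, yield a much stronger bound (of order $O(\log\log N)$ or so), but as written it has a genuine gap exactly where you flag it: the claim that conditioning $n$ on a residue class modulo $p^\ell$ shifts $\E_N(s_p(Cn))$ by only $O(\ell)$ (equivalently, that $\Cov_N\bigl(s_p(Cn),\mathbf{1}[p^\ell\mid En+F]\bigr)=O(\ell\,p^{-\ell})$ up to small errors) is only sketched. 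Lemma~\ref{Leasymp1} gives the joint distribution of single digits of $A_1n$ and $A_2n$; it does not directly provide equidistribution of the high-order digits of $Cn$ along an arithmetic progression modulo $p^\ell$, so you would have to redo the exponential-sum/smoothing analysis with the extra congruence condition, with uniformity in $\ell$. That is doable but is real work, and since the lemma only asks for $O(\log_p^{1/2}N)$, none of it is required.
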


\begin{proof}
By the Cauchy--Schwarz inequality in probabilistic setting, we have
$$
\Cov_N\!\Big(s_p(Cn),v_p\big(En+F\big)\Big)\le 
\V_N\!\big(s_p(Cn)\big)^{1/2}\V_N\!\Big(v_p\big(En+F\big)\Big)^{1/2}.
$$
The variance of $s_p(Cn)=s_p(Cp^{-v_p(C)}n)$ has been (implicitly) given
in \eqref{eq:clt} (see the line below that equation; see also
\eqref{eq:Cov} with $q=p$ and $A_i=A_j=Cp^{-v_p(C)}$) and turned out
to be of the order $\log_p(N)$, while the
variance of $v_p\big(En+F\big)$ has been given in \eqref{eq:linearV}
and turned out to be bounded.
The assertion of the lemma is hence obvious.
\end{proof}

\begin{proof}[Proof of Theorem~\ref{thm:almostall}]
With $S(n)$ given in \eqref{eq:Cat}, we have
\begin{align} \notag
v_p\big(S(n)\big)&=
v_p\big(P(n)\big)-
\sum_{i=1} ^{t}v_p(E_in+F_i)+
\sum _{i=1} ^{r}v_p\big((C_in)!\big)-
\sum _{i=1} ^{s}v_p\big((D_in)!\big)\\
&\ge
-\sum_{i=1} ^{t}v_p(E_in+F_i)-
\frac {1} {p-1}\sum _{i=1} ^{r}s_p(C_in)+
\frac {1} {p-1}\sum _{i=1} ^{s}s_p(D_in).
\label{eq:S(n)}
\end{align}
Here, we used Legendre's formula \eqref{eq:Leg} and the
assumption that $\sum_{i=1}^rC_i=
\sum_{i=1}^sD_i$.

Now, it follows from \cite[Lemma~3.5 and its proof]{BobeAA},
that under the integrality and non-triviality assumption for $S(n)$
of the theorem, we have $r<s$. 

The expression on the right-hand side of \eqref{eq:S(n)} is
a linear combination of the functions $v_p(E_in+F_i)$,
$s_p(C_in)$, and $s_p(D_in)$, which we view again as random variables
on\break $[0,N-1]$. For convenience, let us denote the function on
the right-hand side of \eqref{eq:S(n)} by $T(n)$. By Theorem~\ref{ThCLT}
and \eqref{eq:linearE}, we have
$$
\E_N\!\big(T(n)\big)=\Omega\big(\log_p(N)\big),\quad \quad \text{as }N\to\infty.
$$
The reader should observe that the inequality $r<s$ is used here crucially.
On the other hand, the variance of $T(n)$ is bounded above by the sum
of the pairwise covariances of the involved random variables
(functions).
By Theorem~\ref{ThCLT}, 
\eqref{eq:linearV}, and
\eqref{eq:cov}, we see that
$$
\V_N\!\big(T(n)\big)=O\big(\log_p(N)\big),\quad \quad \text{as }N\to\infty.
$$

Given a random variable $X$, Chebyshev's inequality reads
\begin{equation} \label{eq:Cheb}
\PP\!\big(\vert X-\E(X)\vert<\ep\big)> 1-\frac {1} {\ep^2}V(X). 
\end{equation}
Choosing $\ep=\big(\log_p(n)\big)^{3/4}$ and $X=T(n)$, we get
$$
\PP_N\!\left(\left\vert T(n)-\E_N\!\big(T(n)\big)\right\vert<\log^{3/4}_p(N)\right)
=1+O\!\(\log^{-1/2}_p(N)\),\quad \quad \text{as }N\to\infty.
$$
Thus, for $N$ large enough, we have
$$
T(n)>\E_N\!\big(T(n)\big)-\log^{3/4}_p(N)=\Omega\!\(\log_p(N)\)>\alpha,
$$
with probability $1+O\big(\log^{-1/2}_p(N)\big)$.
If we use this information in \eqref{eq:S(n)}, then the assertion of 
the theorem follows immediately.
\end{proof}

\end{document}